\journal{Journal of Computational Physics}
\def\ps@pprintTitle{%
 \let\@oddhead\@empty
 \let\@evenhead\@empty
 \def\@oddfoot{}%
 \let\@evenfoot\@oddfoot}
\newtheorem{thm}{Theorem}[section]
\newtheorem{remark}{Remark}
\newcommand{\fnc}[1]{\ensuremath{\mathcal{#1}}}
\newcommand{\xil}[0]{\ensuremath{\xi_{l}}}
\newcommand{\matAlmtwol}[1]{\ensuremath{\left[\fnc{J}\frac{\partial\xil}{\partial x_{m}}\right]_{2l}}}
\newcommand{\matAlmtwolmone}[1]{\ensuremath{\left[\fnc{J}\frac{\partial\xil}{\partial x_{m}}\right]_{(2l-1)}}}
\newcommand{\matAlmtwoltilde}[1]{\ensuremath{\widetilde{\left[\fnc{J}\frac{\partial\xil}{\partial x_{m}}\right]}_{2l}}}
\newcommand{\matAlmtwolmonetilde}[1]{\ensuremath{\widetilde{\left[\fnc{J}\frac{\partial\xil}{\partial x_{m}}\right]}_{(2l-1)}}}
\def\ps@pprintTitle{%
 \let\@oddhead\@empty
 \let\@evenhead\@empty
 \def\@oddfoot{}%
 \let\@evenfoot\@oddfoot}
\definecolor{newcolor}{rgb}{.8,.349,.1}
\definecolor{dkgreen}{rgb}{0,0.6,0}
\definecolor{gray}{rgb}{0.5,0.5,0.5}
\definecolor{mauve}{rgb}{0.58,0,0.82}
\tiny\color{gray},
\begin{document}


\begin{frontmatter}

\title{
Scalable Evaluation of Hadamard Products with Tensor Product Basis for Entropy-Stable High-Order Methods}

\author[1]{Alexander {Cicchino}\corref{cor1}\fnref{CICC}}
\cortext[cor1]{Corresponding author. 
}
  \ead{alexander.cicchino@mail.mcgill.ca}
  \fntext[CICC]{Ph.D. Student}

\author[1]{Siva {Nadarajah}\fnref{Nadarajah}}
\fntext[Nadarajah]{Professor}
\ead{siva.nadarajah@mcgill.ca}

\address[1]{Department of Mechanical Engineering, McGill University, 
Montreal, QC, H3A 0C3, Canada}






\begin{abstract}

\end{abstract}
\begin{keyword}{Sum-Factorization,
Hadamard Product,
Entropy Conserving,
Discontinuous Galerkin,
Flux Reconstruction
}
\end{keyword}
\end{frontmatter}

\section{Introduction}

Sum-Factorization techniques were introduced by Orzag~\cite{orszag1979spectral} to efficiently evaluate spectral methods. Orzag~\cite{orszag1979spectral} made use of the tensor product nature of the basis functions to perform the operations in each direction independently, and result in $\mathcal{O}\left(n^{d+1} \right)$ flops for interpolation, projection and differentiation operations. Unfortunately, a tensor product algorithm resulting in $\mathcal{O}\left(n^{d+1}\right)$ flops for Hadamard products does not yet exist in the literature. The aim of this technical note is to demonstrate that Hadamard products can be computed in $\mathcal{O}\left(n^{d+1}\right)$ flops with a tensor product basis, provided the basis functions have one additional property that is common in the spectral and finite element communities.

Entropy stable numerical schemes, initially proposed by Tadmor~\cite{tadmor1984skew} for finite-volume methods, guarantee robustness on extremely coarse meshes. Through the application of summation-by-parts (SBP) operators, and introducing flux differencing techniques, Fisher~\textit{et al}.~\cite{fisher2013high,fisher2013discretely} made the concepts from Tadmor applicable in a finite-element framework. This led to the development of provably nonlinearly stable high-order methods~\cite{fisher2013high,fisher2013discretely,fisher2012high,carpenter2014entropy,parsani2015entropy,parsani2015entropyWall,parsani2016entropy,carpenter2016towards,yamaleev2017family,crean2018entropy,chen2017entropy,Crean2019Staggered,FriedrichEntropy2020}, in a collocated split-form discontinuous Galerkin (DG) form~\cite{gassner2013skew,gassner2016split}, collocated split-form flux reconstruction (FR) framework recovering the DG case~\cite{ranocha2016summation,ranocha2017extended,abe2018stable}, modal, uncollocated entropy stable DG framework~\cite{chan2018discretely,chan2019skew,chan2019discretely,chan2019efficient}, and modal, uncollocated nonlinearly stable FR (NSFR) schemes~\cite{CicchinoNonlinearlyStableFluxReconstruction2021,cicchino2022provably}.

In the application of flux differencing~\cite[Eq. (3.9)]{fisher2013high}, Ranocha~\textit{et al}.~\cite{ranocha2021efficient} numerically demonstrated that $\mathcal{O}\left(n^{d+1}\right)$ flops could be recovered. For modal, uncollocated schemes, the general expression requires the computation of a dense Hadamard product, as seen in Chan~\cite[Eq. (58)]{chan2018discretely}. The focus of this short note is on efficiently evaluating a Hadamard product using a tensor product basis. Specifically, using the tensor product structure, we demonstrate that a Hadamard product can be assembled and evaluated in $\mathcal{O}\left(n^{d+1}\right)$ flops and memory allocation, rather than $\mathcal{O}\left(n^{2d}\right)$, where $d$ is the dimension. We term the algorithm a \enquote{sum-factorized} Hadamard product because we recover the scaling result of sum-factorization techniques~\cite{orszag1979spectral} by exploiting the tensor-product structure in the Hadamard product. This result is dependent on the basis operators being diagonal operators in at least $d-1$ directions---fortunately this is always the case for Hadamard products involving interpolation, projection and differentiation operators of polynomial basis functions. This is the case because we can use sum-factorization techniques to project onto a collocated Lagrange basis, evaluate the Hadamard product using our proposed algorithm, then project back onto the dense basis. In Section~\ref{sec: results}, we provide numerical results showing that the Hadamard product scales at $\mathcal{O}\left(n^{d+1}\right)$. Then we numerically show the application in our in-house partial differential equation solver $\texttt{PHiLiP}$~\cite{shi2021full} based on the Nonlinearly Stable Flux Reconstruction scheme~\cite{CicchinoNonlinearlyStableFluxReconstruction2021,cicchino2022provably} and demonstrate that the entire solver scales at $\mathcal{O}\left(n^{d+1}\right)$ for three-dimensional compressible flow on curvilinear grids, in a low-storage manner. Lastly, we compare the computational costs between a conservative strong form DG scheme, an over-integrated conservative strong form DG scheme, and our NSFR entropy conserving scheme. The NSFR entropy conserving scheme is the only scheme that requires a dense Hadamard product evaluation. We numerically demonstrate that with our proposed sum-factorized Hadamard products, the NSFR entropy conserving scheme is computationally competitive with the DG conservative strong form, and that over-integration schemes take significantly more computational time.

\section{Hadamard Product}\label{sec: hadamard product}

Consider solving $\left(\bm{A}\otimes \bm{B}\right)\circ \bm{C}$, with $\bm{A},\:\bm{B}\in \mathbb{R}^{n\times n}$ and $\bm{C}\in\mathbb{R}^{{n^2}\times {n^2}}$. 

\begin{equation}
    \left(\bm{A}\otimes \bm{B}\right)\circ \bm{C}
    =\begin{bmatrix}
    {A}_{11}\left[\bm{B}\circ \bm{C}_{11} \right] &\dots&{A}_{1n}\left[\bm{B}\circ \bm{C}_{1n} \right]\\
  \vdots& \vdots & \vdots\\
  {A}_{n1}\left[\bm{B}\circ \bm{C}_{n1} \right] & \dots &  {A}_{nn}\left[\bm{B}\circ \bm{C}_{nn} \right]
    \end{bmatrix}.\label{eq: full Had}
\end{equation}
\noindent 
The computational cost associated with solving the Hadamard product in Eq.~(\ref{eq: full Had}) is $\mathcal{O}(n^{4})$. Unfortunately, unlike sum-factorization~\cite{orszag1979spectral}, it is not possible to reduce the computational cost of Eq.~(\ref{eq: full Had}) by evaluating each direction independently. 

If we add an additional condition, $\bm{A}=\text{diag}(\bm{a}),\:\bm{a}\in\mathbb{R}^{n\times 1}$, then
\begin{equation}
    \left(\bm{A}\otimes \bm{B}\right)\circ \bm{C}
    =\begin{bmatrix}
    a_{1}\left[\bm{B}\circ \bm{C}_{11} \right] &&\mbox{\large 0}\\
  & \ddots & \\
 \mbox{\large 0}  &  &  a_{n}\left[\bm{B}\circ \bm{C}_{nn} \right]
    \end{bmatrix}
 =\text{diag}\begin{pmatrix}
    a_{1}\left[\bm{B}\circ \bm{C}_{11} \right]\\
 \vdots \\
 a_{n}\left[\bm{B}\circ \bm{C}_{nn} \right]
    \end{pmatrix}_{{n^{2}} \times n}
.\label{eq: diag A 2D had tens}
\end{equation}

\noindent The computational cost to evaluate Eq.~(\ref{eq: diag A 2D had tens}) is $\mathcal{O}(n^{3})$. Similarly, if $B=\text{diag}(\bm{b})$ with $\bm{A}$ dense, then $\left(\bm{A}\otimes \bm{B}\right)\circ \bm{C}$ costs $\mathcal{O}(n^3)$ to evaluate by changing the stride through the matrix.

This can be generalized for an arbitrary $d$-sized tensor product, $\left(\bm{A}_1\otimes \bm{A}_2\otimes \dots \otimes \bm{A}_d\right)\circ \bm{C}$, with \\$\bm{A}_1,\: \dots,\: \bm{A}_d\in\mathbb{R}^{n\times n}$ and $\bm{C}\in \mathbb{R}^{{n^d}\times {n^d}}$. If $\bm{A}_i=\text{diag}(\bm{a}_i),\:\bm{a}\in\mathbb{R}^{n\times 1},\:\forall i=1,\dots,d-1$, then,

\begin{equation}
    \left(\bm{A}_1\otimes \bm{A}_2\otimes \dots \otimes \bm{A}_d\right)\circ \bm{C}
    =
   \text{diag} \begin{pmatrix}
    (\bm{a}_1)_1 \dots (\bm{a}_{d-1})_1 \left[\bm{A}_d\circ \bm{C}_{11} \right]\\
 \vdots \\
 (\bm{a}_1)_n \dots (\bm{a}_{d-1})_n \left[\bm{A}_d\circ \bm{C}_{{n^d}{n^d}} \right]
    \end{pmatrix}_{{n^{d}} \times n}
,\label{eq: diag D had tens}
\end{equation}

\noindent and similarly for the other $d-1$ directions through pivoting. Thus, in each of these $d$-cases, the total computational cost is $d n^{d+1}=\mathcal{O}(n^{d+1})\:,\forall n>>d$.

In the context of high-order entropy stable methods, the Hadamard product can always be computed with the diagonal property above, regardless of the basis functions. Consider solving,

\begin{equation}
    \left(\frac{\partial \bm{\chi}\left(\bm{\xi}_v^r\right)}{\partial \xi^\alpha_j} \bm{\Pi}\right)\circ \bm{C},\label{eq: general Had in scheme}
\end{equation}
\noindent 
where $\bm{\chi}$ is some linearly independent, polynomial basis, $\bm{\xi}_v^r$ are a set of nodes that the basis are evaluated on in computational space, $\xi_j$ is a direction that the $\alpha$-th order derivative is applied in, and $\bm{\Pi}$ is the projection operator corresponding to the basis $\bm{\chi}$ such that $\bm{\Pi}\bm{\chi}(\bm{\xi}_v^r)=\bm{I}$. Using Zwanenburg and Nadarajah~\cite[Proposition 2.1 and Corollary 2.2]{zwanenburg_equivalence_2016}, we can always make the substitution $\frac{\partial^\alpha \bm{\ell}(\bm{\xi}_v^r)}{\partial \xi_j^\alpha}=\frac{\partial^\alpha \bm{\chi}(\bm{\xi}_v^r)}{\partial \xi_j^\alpha} \bm{\Pi}$ where $\bm{\ell }$ is the Lagrange basis collocated on the nodes $\bm{\xi}_v^r$--that is 
$\bm{\ell}\left(\bm{\xi}_v^r\right)=\bm{I}$. If we let $\bm{\ell}$ be a tensor product basis, then $\frac{\partial^\alpha \bm{\ell}(\bm{\xi}_v^r)}{\partial \xi_j^\alpha}=\bm{I}(\xi_{i<j})\otimes \frac{d^\alpha \bm{\ell}(\xi_j)}{d\xi_j^\alpha}\otimes \bm{I}(\xi_{i> j})$. Therefore, $\frac{\partial^\alpha \bm{\ell}(\bm{\xi}_v^r)}{\partial \xi_j^\alpha}\circ \bm{C}$ recovers the form of Eq.~(\ref{eq: diag D had tens}), where $\bm{A}_i=\bm{I}\:\forall i=1,\dots,d,\: i \neq j$. Similarly, if we have some weight function that is a non-identity diagonal matrix multiplied to the derivative, then we have,

\begin{equation}
\bm{W}\left(\bm{\xi}_v^r\right)\left(\frac{\partial \bm{\chi}\left(\bm{\xi}_v^r\right)}{\partial \xi^\alpha_j} \bm{\Pi}\right) = \bm{W}\left(\bm{\xi}_v^r\right)\frac{\partial^\alpha \bm{\ell}(\bm{\xi}_v^r)}{\partial \xi_j^\alpha} = \bm{W}\left(\xi_{i<j}\right)\otimes \bm{W}\left(\xi_{j}\right)\frac{d^\alpha \bm{\ell}(\xi_j)}{d\xi_j^\alpha}\otimes \bm{W}(\xi_{i> j}).\label{eq: general weight deriv}
\end{equation}

\noindent Eq.~(\ref{eq: general weight deriv}) closely resembles the stiffness matrix that appears in finite element methods.

\begin{remark}
The same algorithm can be used in evaluating the Hadamard product with the surface integral terms in Chan~\cite[Eq. (58)]{chan2018discretely} by substituting $\alpha=0$ and the facet cubature nodes in Eq.~(\ref{eq: general Had in scheme}).
\end{remark}

This leads to the main finding of this technical note---for Hadamard products arising in spectral methods that involve some $\alpha$-th order derivative of a polynomial function, the matrix assembly and evaluation costs $dn^{d+1}$ flops each.

\begin{thm}
    If the basis function is represented as a tensor product, then the Hadamard product involving some $\alpha$-th order derivative of the basis function costs $dn^{d+1}$ flops.
\end{thm}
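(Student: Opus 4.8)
The plan is to reduce the Hadamard product that actually appears in entropy-stable schemes, Eq.~(\ref{eq: general Had in scheme}), to the diagonal tensor-product form of Eq.~(\ref{eq: diag D had tens}), for which the $n^{d+1}$-flop count has already been established, and then to separately account for the cost of forming the dense matrix $\bm{C}$ on which the Hadamard product acts.

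First I would invoke the equivalence of Zwanenburg and Nadarajah~\cite[Proposition~2.1 and Corollary~2.2]{zwanenburg_equivalence_2016} to replace $\frac{\partial^\alpha\bm{\chi}(\bm{\xi}_v^r)}{\partial\xi_j^\alpha}\bm{\Pi}$ by $\frac{\partial^\alpha\bm{\ell}(\bm{\xi}_v^r)}{\partial\xi_j^\alpha}$, the $\alpha$-th derivative of the Lagrange basis collocated on $\bm{\xi}_v^r$; this is an exact identity of operators, so the Hadamard product is unchanged. Next, since $\bm{\ell}$ is a tensor-product basis, this operator factors as $\bm{I}(\xi_{i<j})\otimes\frac{d^\alpha\bm{\ell}(\xi_j)}{d\xi_j^\alpha}\otimes\bm{I}(\xi_{i>j})$, a $d$-fold Kronecker product in which exactly $d-1$ of the factors are identity matrices, hence diagonal; when a diagonal weight $\bm{W}(\bm{\xi}_v^r)$ is present, Eq.~(\ref{eq: general weight deriv}) shows those $d-1$ factors become the diagonal blocks $\bm{W}(\xi_i)$, which does not affect the argument. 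This matches exactly the hypothesis of Eq.~(\ref{eq: diag D had tens}) with $\bm{A}_i$ diagonal for $i\neq j$ and $\bm{A}_j=\frac{d^\alpha\bm{\ell}(\xi_j)}{d\xi_j^\alpha}$, so the Hadamard product collapses to a block-diagonal array of $n^{d-1}$ independent $n\times n$ Hadamard products, each costing $\mathcal{O}(n^2)$, i.e.\ $n^{d+1}$ flops for a fixed direction $j$. Summing over the $d$ coordinate directions yields $dn^{d+1}$, and the auxiliary projections onto and off the collocated Lagrange basis are standard sum-factorized operations at $\mathcal{O}(n^{d+1})$~\cite{orszag1979spectral}, so they do not change the leading order.

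The step that will take the most care --- and the one that makes this an \emph{assembly} result rather than merely an evaluation result --- is bounding the cost of the dense matrix $\bm{C}$, which a priori has $n^{2d}$ entries. The key observation is that the block-diagonal derivative operator for a fixed direction $j$ has nonzero entries only in the $n^{d-1}$ full $n\times n$ blocks lying along the appropriate Kronecker diagonal, so it has only $\mathcal{O}(n^{d+1})$ nonzeros; the Hadamard product annihilates every entry of $\bm{C}$ outside that support, and therefore only $\mathcal{O}(n^{d+1})$ entries of $\bm{C}$ ever need to be evaluated. In the entropy-stable setting each such entry is an $\mathcal{O}(1)$ two-point function evaluation, so assembly is $\mathcal{O}(n^{d+1})$ in both flops and memory. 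I would make this support argument precise direction by direction and then conclude that the combined assembly-and-evaluation of the Hadamard product in all $d$ directions costs $dn^{d+1}=\mathcal{O}(n^{d+1})$ flops for $n\gg d$, which is the claim.
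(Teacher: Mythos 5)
Your proposal is correct and follows essentially the same route as the paper's proof: transform to the collocated Lagrange basis via Zwanenburg and Nadarajah (with the transformation itself handled by sum-factorization), observe that the resulting operator is a Kronecker product with $d-1$ diagonal factors, and invoke the diagonal-tensor-product Hadamard identity of Eq.~(\ref{eq: diag D had tens}) to get $dn^{d+1}$ flops. Your additional explicit argument that only the $\mathcal{O}(n^{d+1})$ entries of $\bm{C}$ lying in the support of the block-diagonal operator ever need to be assembled is not spelled out in the paper's proof itself, but it matches the paper's surrounding claim that assembly and evaluation each cost $dn^{d+1}$ flops and is exactly what its sparsity-pattern algorithms implement.
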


\begin{proof}
    Consider we have a basis $\bm{\chi}$ evaluated on a cubature set $\bm{\xi}_v^r$, and we wish to compute $\left(\frac{\partial \bm{\chi}\left(\bm{\xi}_v^r\right)}{\partial \xi^\alpha_j} \bm{\Pi}\right)\circ \bm{C}$. From Zwanenburg and Nadarajah~\cite[Proposition 2.1 and Corollary 2.2]{zwanenburg_equivalence_2016}, we can apply a basis transformation on both $\bm{C}$ and $\frac{\partial \bm{\chi}\left(\bm{\xi}_v^r\right)}{\partial \xi^\alpha_j} \bm{\Pi}$ to a collocated nodal Lagrange set constructed and evaluated on $\bm{\xi}_v^r$. It is important to note that for the basis transformation, we can directly use sum-factorization techniques~\cite{orszag1979spectral} that give an additional $dn^{d+1}$ flops. After the basis transformation, the resulting basis in the Hadamard product is of the form of Eq.~(\ref{eq: general weight deriv}), and the evaluation of the Hadamard product is of the form of Eq.~(\ref{eq: diag D had tens}), and thus is evaluated in $dn^{d+1}$ flops.
\end{proof}

This theorem allows us to solve Hadamard products at $\mathcal{O}\left(n^{d+1}\right)$ for general uncollocated modal schemes in curvilinear coordinates. 

We provide a sample algorithm for implementation in three-dimensions from our in-house PDE solver \texttt{PHiLiP} \enquote{Operators} class. A similar structure is done for the surface Hadamard products where the one-dimensional basis matrices are of size $m\times n,\:m<n$. Let's assume we want to compute $\left(\bm{D}\otimes \bm{W}\otimes \bm{W} \right)\circ \bm{C}_x$, $\left(\bm{W}\otimes\bm{D}\otimes  \bm{W}\right)\circ \bm{C}_y$, and $\left(\bm{W}\otimes  \bm{W}\otimes\bm{D}\right)\circ \bm{C}_z$, where $\bm{W}=\text{diag}\left(\bm{w}\right)$ stores some weights, and $\bm{D}$ is dense. This mimics the Hadamard product to be computed for entropy conserving schemes alike in Chan~\cite[Eq. (58)]{chan2018discretely}. For the tensor product, we let the $x$-direction run fastest, then the $y$-direction, and the $z$-direction runs slowest. We will refer to the first term as the first direction, the second term as the second direction and the third term as the third direction. We evaluate it in three steps. First, we create two vectors of size $\mathbb{R}^{n^{d+1}\times d}$ storing a sparsity pattern: one stores the non-zero row indices and the other stores the non-zero column indices for each of the $d$ directions. From these, we can build an $n^d\times n$-sized matrix, for example both $\bm{D}\otimes \bm{W}\otimes \bm{W}$ and $\bm{C}_x$, storing only the non-zero entries of the general $n^d\times n^d$-sized matrix for each of the $d$ directions. We provide the pesudocode algorithm to build $\bm{D}\otimes \bm{W}\otimes \bm{W}$, $\bm{W}\otimes\bm{D}\otimes  \bm{W}$, and $\bm{W}\otimes  \bm{W}\otimes\bm{D}$. Lastly, after both matrices of size $\mathbb{R}^{n^d\times n}$ for each direction are built, we evaluate the Hadamard product directly.

We generate the sparsity patterns by the algorithm~\ref{alg: sparsity}:

\begin{algorithm}
\caption{Sparsity Pattern Algorithm}\label{alg: sparsity}
\begin{algorithmic}[1]
\State Create rows and columns vectors of size $n^{d+1}\times d$ storing the sparsity pattern.
\State Loop over the $n^4$ indices.
    \For{$i=0;\:i<n;\:i$++}
    \For{$j=0;\:j<n;\:j$++}
    \For{$k=0;\:k<n;\:k$++}
    \For{$l=0;\:l<n;\:l$++}
    

        \State Store the array indices and the non-zero row indices.
        \State array\_index $\gets i * n^3 +j*n^2+k*n+l$
        \State row\_index $\gets i*n^2+j*n +k$
        \State rows[array\_index][0,1,2] $\gets$ row\_index

        \State Store the non-zero column indices through pivoting.
        
        \Comment{Direction 0 (x).}
        \State column\_index\_x $\gets$ $i*n^2+j*n + l$
        \State columns[array\_index][0] $\gets$ column\_index\_x
        
       \Comment{Direction 1 (y).}
        \State column\_index\_y $\gets$ $l*n+ k +i*n^2$
        \State columns[array\_index][1] $\gets$ column\_index\_y 
        
        \Comment{Direction 2 (z).}
        \State column\_index\_z $\gets$ $l*n^2+k+j*n$
        \State columns[array\_index][2] $\gets$ column\_index\_z 
    \EndFor
    \EndFor
    \EndFor
    \EndFor
\end{algorithmic}
\end{algorithm}

Then, using the sparsity patterns, we create the matrices $\bm{D}\otimes\bm{W}\otimes\bm{W}$, $\bm{W}\otimes\bm{D}\otimes  \bm{W}$, and $\bm{W}\otimes  \bm{W}\otimes\bm{D}$ by the algorithm~\ref{alg: basis}:

\begin{algorithm}
\caption{Basis Assembly Algorithm}\label{alg: basis}
\begin{algorithmic}[1]
\State Loop over the $n^4$ indices.
\For{index=0, counter=0; index$<n^{d+1}$; index++, counter++}
\If{counter == n}
\State counter $\gets$ 0
\EndIf

\State Extract the one-dimensional basis values from the sparsity patterns.

\Comment{We use integer division and \% as the mod operator.}

\State x\_row\_index $\gets$ rows[index][0] \% n
\State x\_column\_index $\gets$ columns[index][0] \% n
\State y\_row\_index $\gets$ (rows[index][1] / n) \% n
\State y\_column\_index $\gets$ (columns[index][1] / n) \% n
\State z\_row\_index $\gets$ rows[index][2] / n / n 
\State z\_column\_index $\gets$ columns[index][2] / n / n

\Comment{Basis\_Sparse is an array of matrices of size $d\times \left(n^d\times n\right)$.}

\State Create the matrix storing only non-zero values.

\Comment{Direction 0 (x).}
\State Basis\_Sparse[0][rows[index][0]][counter] $\gets$ basis[x\_row\_index][x\_column\_index] * weights[y\_row\_index] * weights[z\_row\_index];
    
\Comment{Direction 1 (y).}

\State Basis\_Sparse[1][rows[index][1]][counter]
    $\gets$ basis[y\_row\_index][y\_column\_index]
    * weights[x\_row\_index]
    * weights[z\_row\_index];

\Comment{Direction 2 (z).}

\State Basis\_Sparse[2][rows[index][2]][counter]
    $\gets$ basis[z\_row\_index][z\_column\_index]
    * weights[x\_row\_index]
    * weights[y\_row\_index];
    
\EndFor

\end{algorithmic}
\end{algorithm}

\noindent where \enquote{basis} refers to $\bm{D}$ in the given direction, \enquote{weights} refers to $\bm{W}$, and \enquote{Basis\_Sparse} refers to their tensor product storing only the $n^{d+1}$ non-zero values. We can similarly construct $\bm{C}_x$, $\bm{C}_y$ and $\bm{C}_z$ using the sparsity patterns. The third step of evaluating the Hadamard product doesn't require the sparsity patterns since it is the Hadamard product of $\mathbb{R}^{n^d \times n}$ dense matrices.
\section{Results}\label{sec: results}

For numerical verification, we use the open-source Parallel High-order Library for PDEs (\texttt{PHiLiP}, \newline \url{https://github.com/dougshidong/PHiLiP.git})~\cite{shi2021full}, developed at the Computational Aerodynamics Group at McGill University. For the first test, we consider three-dimensions. We let $\left(\bm{C}\right)_{ij}=c_i c_j$, with \\$\bm{c}=\text{rand}([1e^{-8},30])$. We compare the cost of evaluating the three-dimensional Hadamard product \\$\sum_{j=1}^{3}\frac{\partial \bm{\ell}(\bm{\xi}_v^r)}{\partial \xi_j}\circ \bm{C}$ directly, and using our proposed algorithm in Eq.~(\ref{eq: diag D had tens}), for polynomial degrees $n\in[3,15]$.

\begin{figure}[H]
    \centering
    \includegraphics[width=0.8\textwidth]{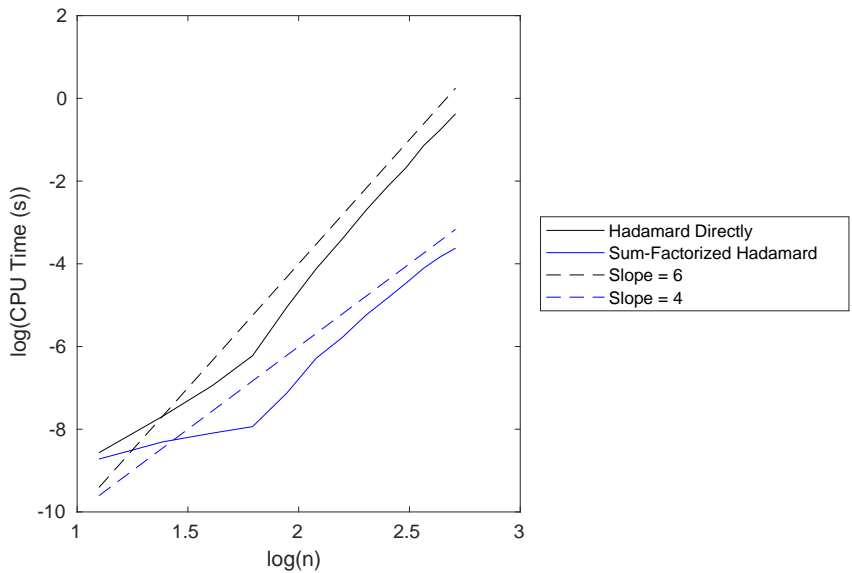}
    \caption{CPU time versus polynomial degree}\label{fig: comp cost}
\end{figure}

In Fig.~\ref{fig: comp cost}, the black solid line corresponds to evaluating the Hadamard directly, the black dashed line corresponds to a slope of 6, the blue solid line corresponds to our proposed method using the tensor-product structure, and the blue dashed line corresponds to a slope of 4. We store the CPU time by running the test on one processor and we record the clock time before the algorithm then subtract the clock time after computing $\sum_{j=1}^{3}\frac{\partial \bm{\ell}(\bm{\xi}_v^r)}{\partial \xi_j}\circ \bm{C}$. In Fig.~\ref{fig: comp cost}, the conventional way of computing a Hadamard product in all three directions in three-dimensions costs $\mathcal{O}(n^{2d})$, whereas our proposed \enquote{sum-factorized} form that exploits the tensor product structure costs $\mathcal{O}(n^{d+1})$.

Next, using our proposed sum-factorized Hadamard product, we wish to compare the performance of the entropy-conserving scheme with the conservative DG scheme using sum-factorization techniques. We solve the three-dimensional inviscid Taylor-Green vortex (TGV) problem on a coarse curvilinear grid using NSFR on uncollocated Gauss-Legendre quadrature nodes. We solve it in six different ways. First, with the conservative DG scheme that does not require a Hadamard product. Second, the conservative DG scheme over-integrated by $2(p+1)$ to resemble exact integration for a cubic polynomial on a curvilinear grid. We consider over-integration because it is another tool used for stabilization~\cite{winters2018comparative} through polynomial dealiazing. Lastly, with our NSFR entropy conserving scheme~\cite{CicchinoNonlinearlyStableFluxReconstruction2021,cicchino2022provably} that requires an uncollocated Hadamard product along with entropy projection techniques~\cite{chan2019skew}. We then, in dashed lines, run the same tests with an FR correction value of $c_{+}$~\cite{vincent_insights_2011} to compare the additional cost of FR versus its DG equivalent. For the test, we perform 10 residual solves sequentially and record the total CPU time for the 10 residual solves.

 \begin{figure}[H]
    \centering
    \includegraphics[width=0.6\textwidth]{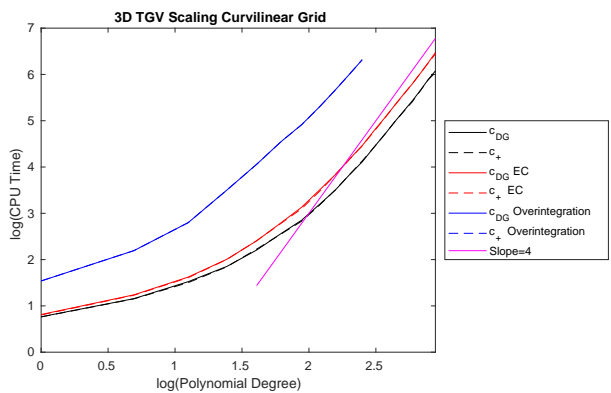}
    \caption{CPU time versus polynomial degree TGV}\label{fig: Philip scaling}\label{fig: tgv scale}
\end{figure}

From Figure~\ref{fig: tgv scale}, all three methods have the solver scale at order $\mathcal{O}\left(p^{d+1}\right)$ in curvilinear coordinates because they exploit sum-factorization~\cite{orszag1979spectral} for the matrix-vector products, and the NSFR-EC scheme uses our proposed sum-factorized Hadamard product evaluation. The blue line representing the over-integrated conservative DG scheme took the most amount of time, and the cut-off at extremely high polynomial orders, $p>20$, was due to memory issues with storing the additional quadrature nodes. The conservative DG scheme took the least amount of time, but the entropy conserving scheme involving the Hadamard product with the two-point flux had a comparable CPU time thanks to the algorithm presented in Section~\ref{sec: hadamard product}. Also, there was a negligible computational cost difference between all $c_\text{DG}$ versus $c_+$ schemes since the mass matrix inverse was approximated in a weight-adjusted form. From Fig.~\ref{fig: tgv scale}, it appears that using the algorithm in Sec.~\ref{sec: hadamard product}, the entropy conserving scheme's cost is more comparable to the conservative DG scheme rather than an over-integrated/exactly integrated DG scheme.

To further demonstrate the performance differences between the NSFR-EC-DG scheme using the \enquote{sum-factorized} Hadamard product evaluations detailed in Sec.~\ref{sec: hadamard product} and the conservative DG scheme in curvilinear coordinates, we run the inviscid TGV on a non-symmetrically warped curvilinear grid and compare the wall clock times. All schemes use an uncollocated, modal Lagrange basis, and are integrated on Gauss-Legendre quadrature nodes. We integrate in time with a 4-$th$ order Runge-Kutta time-stepping scheme with an adaptive Courant-Friedrichs-Lewy value of 0.1 until a final time of $t_f = 14$ s. For NSFR-EC-DG we use Chandrashekar's flux~\cite{chandrashekar2013kinetic} in the volume and surface with Ranocha's pressure fix~\cite{ranocha2022preventing}. For the DG conservative scheme, we use the Roe~\cite{roe1981approximate} surface numerical flux. All of the tests were run on a single node provided by the Digital-Alliance of Canada.

\begin{table}[H]\centering
\caption{TGV Wall Clock Time}
\begin{tabular}{|c|c|c|c|}
\hline
$p$ & Number of Elements & Scheme & Wall Clock (s)\\
&&&\\
\hline
3& $4^3$ &NSFR-EC-DG&718.35\\
\cline{3-4}
& &DG-cons&604.15\\
\cline{3-4}
& &DG-cons-overint&2508.57\\
\cline{2-4}
& $8^3$ &NSFR-EC-DG&5441.31\\
\cline{3-4}
& &DG-cons&6040.61\\
\cline{3-4}
& &DG-cons-overint&23280.30\\
\hline
4& $4^3$&NSFR-EC-DG&1790.95\\
\cline{3-4}
& &DG-cons&1495.61\\
\cline{3-4}
& &DG-cons-overint& 5955.21\\
\cline{2-4}
& $8^3$&NSFR-EC-DG& 9259.31\\
\cline{3-4}
& &DG-cons& 8090.78\\
\cline{3-4}
& &DG-cons-overint&28198.60\\
\hline
5&  $4^3$&NSFR-EC-DG& 1617.21\\
\cline{3-4}
& &DG-cons& crashed\\
\cline{3-4}
& &DG-cons-overint&11055.10\\
\cline{2-4}
&  $8^3$&NSFR-EC-DG& 12050.60\\
\cline{3-4}
& &DG-cons&crashed\\
\cline{3-4}
& &DG-cons-overint&33741.20\\
\hline
\end{tabular}\label{tab: wall clock table}
\end{table}

From Table~\ref{tab: wall clock table}, the NSFR-EC-DG scheme took about a 11\% longer run time as compared to conservative DG. 
This small percentage difference demonstrates how the algorithm in Sec.~\ref{sec: hadamard product} has drastically reduced the computational cost of computing a two-point flux, since we are not required to do twice nor squared the work. The $p=5$ DG conservative scheme diverged at $t=9.06$ s with a wall clock time of 1369.93 s on the $4^3$ mesh, and at $t=6.70$ s with a wall clock of 4473.26 s on the $8^3$ mesh. This further demonstrates the advantage of the NSFR-EC scheme since it has provable guaranteed nonlinear stability for a reasonable computational cost trade-off. The over-integrated scheme took on average 396\% longer run-time than the NSFR-EC-DG scheme. From both Fig.~\ref{fig: Philip scaling} and Table~\ref{tab: wall clock table}, it is clear that with the proposed sum-factorized Hadamard product, entropy conserving and stable methods are computationally competitive with classical DG schemes.
\section{Conclusion}

We derived and demonstrated a \enquote{sum-factorized} technique to build and compute Hadamard products at $\mathcal{O}\left(n^{d+1}\right)$. With the fast evaluations, the computational cost of entropy conserving and stable schemes becomes computationally competitive with the classical conservative modal discontinuous Galerkin method in general three-dimensional curvilinear coordinates.

\bibliographystyle{model1-num-names}
\biboptions{sort&compress}
\bibliography{bibliographie}

\end{document}